\documentclass[11pt]{amsart}

\usepackage{pb-diagram}
\usepackage{calrsfs}
\usepackage{amssymb}
\usepackage{amsmath}
\usepackage{amsxtra}
\usepackage{amscd}
\usepackage{mathtools}
\usepackage{float}
\usepackage{graphicx} 
\usepackage{amsfonts}
\usepackage[utf8]{inputenc}
\usepackage[T1]{fontenc}
\usepackage{enumerate}
\usepackage{array,multirow}
\usepackage{color}
\usepackage[all]{xy}
\usepackage{cases}
\usepackage{empheq}
\usepackage{todonotes}
\usepackage{amsbsy}
\usepackage{soul}
\usepackage{cancel}
\usepackage{calc}
\usepackage{mathdots}
\usepackage{framed}
\usepackage{color, colortbl}
\definecolor{LightCyan}{rgb}{0.88,1,1}
\definecolor{Gray}{gray}{0.9}
\usetikzlibrary{cd, fit}
\tikzset{myboxgroup/.style={draw, densely dotted}} 
\usepackage{pgfplots}
\pgfplotsset{compat=1.13}
\usepackage{algorithmicx}
\usepackage{algorithm}
\usepackage{algpseudocode}
\usepackage{hyperref}
\usepackage{tikz}
\usetikzlibrary{arrows, decorations.markings}
\usetikzlibrary{cd}

\newsavebox\CBox
\newcommand\hcancel[2][0.5pt]{%
  \ifmmode\sbox\CBox{$#2$}\else\sbox\CBox{#2}\fi%
  \makebox[0pt][l]{\usebox\CBox}%
  \rule[0.5\ht\CBox-#1/2]{\wd\CBox}{#1}}

\textheight 210mm
\textwidth 140mm

\setlength{\topmargin}{0cm}
\setlength{\textheight}{22cm}
\setlength{\textwidth}{16cm}
\setlength{\oddsidemargin}{-0.1cm}
\setlength{\evensidemargin}{-0.1cm}

\newtheorem{theorem}{Theorem}

\newtheorem{lemma}{Lemma}

\newtheorem{remark}{Remark}

\newcommand{\Kh}{\mathrm{Kh}}
\newcommand{\N}{\mathbb{N}}
\newcommand{\Z}{\mathbb{Z}}
\newcommand{\Q}{\mathbb{Q}}
\newcommand{\C}{\mathbb{C}}
\newcommand{\qdim}{\mathrm{qdim}}

\numberwithin{equation}{section}

\begin{document} 

\title[A generalized skein relation for Khovanov homology and a categorification  of $\theta$
]{A generalized skein relation for Khovanov homology 
and a categorification of the $\theta$-invariant
}

\author{M. Chlouveraki}
\address{Laboratoire de Math\'ematiques, UVSQ / Universit\'e Paris-Saclay,
B\^atiment Fermat,
45 avenue des Etats-Unis,
78035 Versailles cedex, France}
\email{maria.chlouveraki@uvsq.fr}
\urladdr{http://chlouveraki.perso.math.cnrs.fr/}

\author{D. Goundaroulis}
\address{Center for Integrative Genomics,
University of Lausanne,
CH-1015, Lausanne, Switzerland.
\\ Swiss Institute of Bioinformatics, CH-1015, Lausanne, Switzerland.}
\email{dimoklis.gkountaroulis@unil.ch}

\author{A. Kontogeorgis}
\address{Department of Mathematics, University of Athens\\
Panepistimioupolis, 15784 Athens, Greece
}
\email{kontogar@math.uoa.gr}

\author{S. Lambropoulou}
\address{ Department of Mathematics,
National Technical University of Athens,
Zografou campus, GR--157 80 Athens, Greece.}
\email{sofia@math.ntua.gr}
\urladdr{http://www.math.ntua.gr/~sofia}

\keywords{}

\thanks{The first author is supported by the \emph{Agence Nationale de la Recherche} through the JCJC project ANR-18-CE40-0001.}

\subjclass[2010]{57M27, 57M25}


\begin{abstract}
The Jones polynomial is a famous link invariant that can be defined diagrammatically via a skein relation.
Khovanov homology is a richer link invariant that categorifies the Jones polynomial. Using spectral sequences, we obtain a skein-type relation satisfied by the Khovanov homology. Thanks to this relation, we are able to generalize the Khovanov homology  in order to obtain a categorification of the $\theta$-invariant, which is itself a generalization of the Jones polynomial.
\end {abstract}
\maketitle

\section{Introduction}

One of the greatest achievements in knot theory and low-dimensional topology is the pioneering construction of the Jones polynomial by V.~F.~R.~Jones in 1984, which
 advanced spectacularly the tabulation of knots.
The value of the Jones polynomial $J(L)$ on an oriented link $L$ can be calculated through different methods. 
One of them is algebraic and consists of computing the Markov trace of the image of a braid representative of $L$ in the Temperley--Lieb algebra.
Another one is diagrammatic and
uses the fact that the Jones polynomial satisfies a ``skein relation'', that is, a linear relation between the values of the polynomial on a collection of three links that differ from each other only on a selected crossing. More specifically, for an indeterminate $q$, we have
$$q^{-2}J(L_+) - q^2J(L_-)=(q^{-1}-q)J(L_0)$$
where $L_+,L_-,L_0$ is a so-called Conway triple, or equivalently,
$$
q^{-2}J( 
\raisebox{-0.35em}{\includegraphics[scale=0.5]{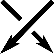}} ) - q^2
J(
\raisebox{-0.35em}{\includegraphics[scale=0.5]{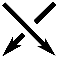}} ) = (q^{-1}-q)
J(\raisebox{-0.35em}{\includegraphics[scale=0.5]{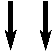}}).
$$
Combining this skein relation with the initial condition that the value of the Jones polynomial on the unknot is equal to $1$ allows us to compute the Jones polynomial of any link. 

The Framization of the Temperley--Lieb algebra, introduced in \cite{GJKL}, is a non-trivial extension of the classical Temperley--Lieb algebra via the addition of the so-called ``framing'' generators, each of which is a generator of a cyclic group of order $d$. 
It is also endowed with a Markov trace, which gives rise to an invariant of framed links. When restricted to classical links, this invariant is denoted by $\theta$.
It has been shown in \cite{CJKL,GoLa} that the $\theta$-invariant can be also defined diagrammatically through a skein relation, which is however not global as in the case of the Jones polynomial. In fact, the same skein relation as the one satisfied by the Jones polynomial holds, but only on crossings \emph{between different components}; we call these crossings \emph{mixed crossings}.
Using a recursive proof method developed originally by Lickorish--Millett \cite{Lickorish1987-am} and adapted for $\theta$ by Kauffman--Lambropoulou \cite{Kauffman2017-ra}, it can be shown that calculating the value of $\theta$ on a link $L$ amounts to calculating the value of $\theta$ on links that are unions of unlinked knots and are obtained via the skein relation. More specifically, a series of switchings and smoothings of mixed crossings transforms the initial link to a family of unions of unlinked knots, 
 called \emph{descending stacks}. Therefore, as we also explain in Section \ref{sec:knottheory-invariant},
the computation of $\theta$ can be seen as a dynamical system whose initial condition is the value of $\theta$ on a union of unlinked knots: if $L$ is union of $r$ unlinked knots, we have
$$\theta(L) = d^{r-1} J(L).$$
The invariant $\theta$ is stronger than the Jones polynomial, in the sense that it distinguishes links that the Jones polynomial cannot distinguish \cite{CJKL, GoLa, Chlou}.

Khovanov homology, introduced by M.~Khovanov \cite{MR1740682}, is an oriented link invariant that arises as the homology of a chain complex associated with a link. It is regarded as a categorification of the Jones polynomial, which can be obtained as the graded Euler characteristic of this complex.  However, Khovanov homology encompasses more information about a link than its Jones polynomial. Further, it is known that Khovanov homology can detect the unknot, while it is still an open question whether the Jones polynomial can do the same.

Inspired by this and motivated by the fact that the $\theta$-invariant is stronger than the Jones polynomial, we expect that a categorification of the $\theta$-invariant would be stronger than all the invariants mentioned above. Given the skein-theoretic definition of $\theta$, a first step towards a  possible categorification is the obtention of a skein relation satisfied by the Khovanov homology.  Unfortunately, the Khovanov homology does not satisfy a skein relation in the usual sense \cite{Wehrli2003-jj}. So in the first part of our article we investigate a  skein-type relation for Khovanov homology, using the machinery of spectral sequences. We show that the Khovanov homology (more specifically, its Poincar\'e polynomial) satisfies a generalized skein relation which involves its values on a Conway triple, plus a ``defect'' term that arises from spectral sequences associated to the Khovanov chain complex.

In the second part of our article, we aim to define a ``framization'' of the Khovanov homology, which should have the $\theta$-invariant as Euler characteristic. 
On a union of unlinked knots, we achieve this by tensoring with the group algebra of the cyclic group of order $d$. On an arbitrary link $L$, 
we follow the method of Lickorish--Millett and Kauffman--Lambropoulou. However, their algorithm  depends on several choices made on the link, such as, for instance, the ordering of the knot components and of the mixed crossings. So one has to show that the final result is independent of these choices.  This is achieved in \cite{Lickorish1987-am}  and in \cite{Kauffman2017-ra} thanks to the skein relation and  the properties of the base invariant involved. In our case though, we do not have a skein relation in the classical sense and we cannot prove the independence of the choices made in the same
way that they did. This is why we slightly modify the generalized skein relation proved in the first part of the article, before we apply it only to mixed crossings.
 By summing over several choices and dividing by their number, we eventually obtain a link  invariant $\Kh_{d,d'}$, depending on $d$ and an extra parameter
 $d'$, which is the Poincar\'e polynomial of a homology $KH^{*,*}_{d,d'}(L)$, that is,
\[
\Kh_{d,d'}(L)= \sum_{i,j\in \Z} t^i q^j \dim KH_{d,d'}^{i,j}(L).
\]
We prove that both the classical Khovanov homology and the $\theta$-invariant can be obtained as specializations of $\Kh_{d,1}$; the former for $d=1$, the latter for $t=-1$.

 \section{A generalized skein relation for Khovanov homology}

Let $q$ and $t$ be indeterminates over $\Q$.

\subsection{Generalities on graded vector spaces}

Let $V = \oplus_m V^m$ be a graded $\Q$-vector space. The \emph{graded dimension}
$\qdim(V)$ of $V$ is the Laurent polynomial in $q$ given by 
$$\qdim(V)=\sum_{m} q^m {\rm dim}(V^m).$$
If $V'$ is another graded vector space, the graded dimension satisifes
$$\qdim(V \otimes V') = \qdim(V)\qdim(V') \quad \text{and}\quad
\qdim(V \oplus V') = \qdim(V)+\qdim(V').$$

Let $l \in \mathbb{Z}$. We define the  $l$-\emph{shift} $V\{l\}$ of $V$ to be the graded vector space defined by 
$$V\{l\}^m=V^{m-l}.$$
Note that $\qdim(V\{l\})=q^l\qdim(V)$.

\subsection{Classical Khovanov homology} 

In this section, we give a short introduction to Khovanov homology following the exposition of \cite{Turner2006-ck}.
Let $L$ be a link and $D$ a diagram of $L$ with $n$ crossings.  
Each crossing can be resolved in two ways: 

 \begin{figure}[h!]
 \includegraphics[scale=0.2]{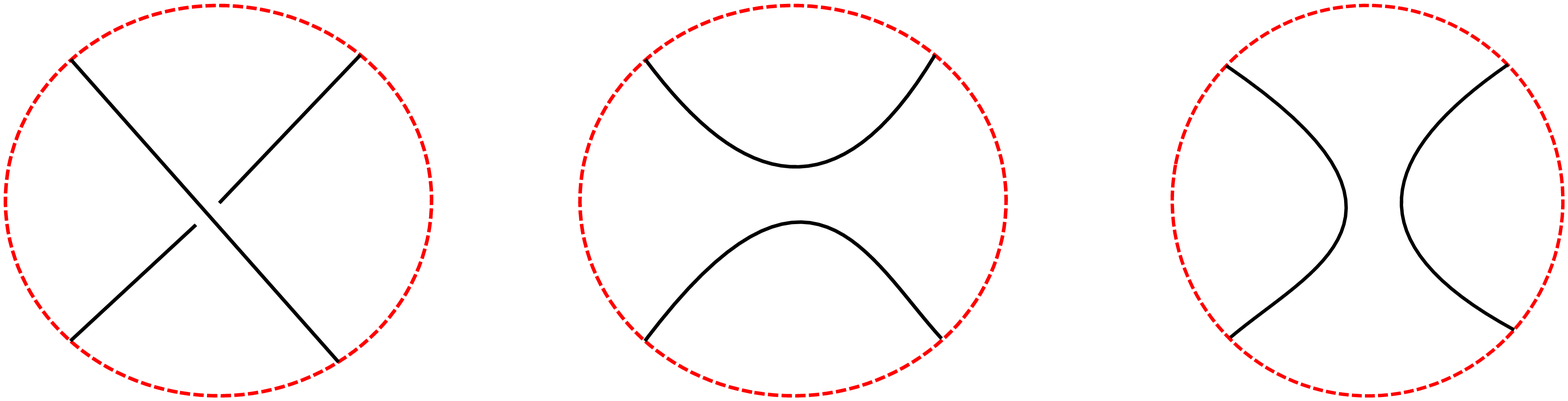}
 \caption{Smoothings of a crossing \label{pictureP2}}
 \end{figure}
 The first resolution in Figure 1 is called a $0$-\emph{smoothing} and the second one is an $1$-\emph{smoothing}.
Thus, there are $2^n$ ways of resolving all crossings of $D$, each of them resulting to a collection of circles in the plane.
If now we number the crossings of $D$ by $1,2,\ldots,n$, then each such collection can be represented by a binary $n$-string where the entry $0$ or $1$ in the $j$-th position corresponds to a $0$-smoothing or $1$-smoothing respectively of the $j$-th crossing, for all $j=1,\ldots,n$. In the end, $D$ has $2^n$ smoothings indexed by $I_n:=\{0,1\}^n$. We can thus form a hypercube as in the following figure (for $n$=3)
$$ 
\xymatrix{
   &  100  \ar@{-}[dr] \ar@{-}[r] & 110 \ar@{-}[dr] & \\
000 \ar@{-}[ur] \ar@{-}[dr] \ar@{-}[r]  & 010 \ar@{.}[ur] \ar@{.}[dr]& 101 \ar@{-}[r] & 111 \\
   & 001 \ar@{-}[ur] \ar@{-}[r] & 011  \ar@{-}[ur] & 
}
$$
with an edge between words differing in exactly one place.
For general $n$, we see the smoothings as vertices of the hypercube indexed by 
$I_n$. For $\alpha \in I_n$, we will denote by $r_\alpha$ the number of 1's in $\alpha$ and by $k_\alpha$ the number of circles in the plane of the associated smoothing.

In Khovanov homology we further assume that the link $L$ is oriented. 
We denote by $n_+=n_+(D)$ the number of positive crossings and by $n_-=n_-(D)$ the number of negative crossings, and we use the simpler notation $n_+$ and $n_-$ whenever the diagram is fixed.
Let $V$ be a $2$-dimensional $\Q$-vector space with basis $\{e,x\}$. 
 We grade the two basis elements by ${\rm deg}(e)=1$ and ${\rm deg}(x)=-1$.
 To each  $\alpha \in I_n$ we associate the graded vector space
 $$V_\alpha:=V^{\otimes k_\alpha}\{r_\alpha+n_+-2n_-\}.$$
 For $i \in \{-n_-,\ldots,n_+\}$, we define $C^{i,*}(D)$ to be the direct sum
 \begin{equation}\label{def of C}
 C^{i,*}(D):= \bigoplus_{\alpha \in I_n \text{ with } r_\alpha=i+n_-} V_\alpha.
 \end{equation}
 For $v \in V_\alpha \subset C^{*,*}(D)$, we have $v \in C^{i,j}(D)$ if and only if  $i=r_\alpha-n_-$ and 
  $j= {\rm deg}(v)+r_\alpha+n_+-2n_-$.
  We then say that $v$ has 
  \emph{homological grading} $i$ and \emph{quantum grading} 
  $j$.
 
We will now define a differential $d$ turning $(C^{*,*}(D),d)$ into a complex. 
First, observe that every edge of the hypercube is between elements of $I_n$ differing in exactly one place. It can thus be transformed into an arrow from the string with $0$ in that place towards the string with $1$ in that place. This arrow can be labelled by a string that is the same as the ones labelling the tail and the head except that it has a $\star$ in the position that changes. For example, there is an arrow
from $0100$ to $0110$ which is denoted by $01\star0$.

Now note that, for an arrow $\zeta: \alpha \rightarrow \alpha'$, the smoothings $\alpha$ and $\alpha'$ are identical except for a small disc, the \emph{changing disc}, around the crossing that changes from a $0$-smoothing to an $1$-smoothing (the one marked by a $\star$ in the label of $\zeta$). Since each circle in a smoothing has a copy of the vector space $V$ attached to it, we can define a linear map $d_\zeta: V_\alpha \rightarrow V_{\alpha'}$ as follows:
Let $m: V \otimes V \rightarrow V$ be the linear map defined by
$$m(e \otimes e)=e,\quad m(e \otimes x) = m(x \otimes e)=x,\quad m(x \otimes x)=0$$
and let 
$\Delta : V \rightarrow V \otimes V$ be the linear map defined by
$$\Delta(e)=e \otimes x + x \otimes e, \quad \Delta(x)=x\otimes x.$$
Then $d_\zeta$ is defined to be the identity on circles not entering the changing disc and either $m$ or $\Delta$ on the circles appearing in the changing disc (depending on whether two circles are fused into one or a circle splits into two when going from $\alpha$ to $\alpha'$).

Finally, we define a map $d^i: C^{i,*}(D) \rightarrow C^{i+1,*}(D)$ by setting
$$d^i(v):= \sum_{\zeta \text{ with } {\rm Tail}(\zeta)=\alpha} {\rm sign}(\zeta)d_\zeta(v)
\quad \text{for all } v \in V_\alpha \subset C^{i,*}(D)$$
where ${\rm sign}(\zeta)=(-1)^{\text{$\#$ of 1's to the left of $\star$ in $\zeta$}}$.
We have $d^{i+1} \circ d^i=0$.

The graded Euler characteristic of this complex, \emph{i.e.,} 
$$\sum_i (-1)^i\qdim(C^{i,*}(D)) \in \Q[q,q^{-1}]$$
is the unnormalized Jones polynomial $\hat{J}(L)$ of $L$. Dividing $\hat{J}(L)$  by $q+q^{-1}$ yields the Jones polynomial  $J(L)$ of $L$.

We define the \emph{Khovanov homology of the diagram $D$} by
$$KH^{*,*}(D):=H(C^{*,*}(D),d).$$ 
The Khovanov homology is invariant under the Reidemeister moves, and thus a link invariant. Hence, it does not depend on the choice of diagram $D$, and we can talk about the \emph{Khovanov homology of the link $L$}.  We have
$$\sum_i (-1)^i\qdim(KH^{i,*}(L)) =\hat{J}(L)= (q+q^{-1})J(L).$$

The Khovanov homology of $L$ can be also be read off the \emph{Khovanov polynomial} $\mathrm{Kh}(L)$ of $L$, which is given by
$$\mathrm{Kh}(L) =  \sum_{i,j} t^i q^j {\rm dim}(KH^{i,j}(L)) =\sum_i t^i\qdim(KH^{i,*}(L)).$$
Obviously, evaluating $\Kh(L)$ at $t=-1$ yields the unnormalized Jones polynomial $\hat{J}(L)$.

We choose not to give right now explicit examples of computations of the classical Khovanov homology, because: (a) some will be given in \S \ref{sec-Examples} along with further computations; (b) the reader can find plenty in literature (see, for example, \cite{Natan2002} or \cite{Turner2006-ck}).

\subsection{The effect of switching a crossing}

A skein relation relates the 
value of a link invariant of a diagram $D$ of a link $L$ with the values of the diagrams obtained by 
switching and smoothing a given crossing. 
Let us consider the numbering of the crossings of $D$ and the associated hypercube introduced in the previous section. If we switch  the $j$-th crossing the sign from negative to positive, then every $1$ at the $j$-th position should become $0$ and every $0$ should change to $1$, as if we take the “not” operator on a given binary number at the $j$-th position. In general the operation of switching the crossing induces an automorphism of the hypercube.

For example, if we take $j=2$, then the hypercube
$$
\xymatrix{
   &  100  \ar@{-}[dr] \ar@{-}[r] & 110 \ar@{-}[dr] & \\
000 \ar@{-}[ur] \ar@{-}[dr] \ar@{-}[r]  & 010 \ar@{.}[ur] \ar@{.}[dr]& 101 \ar@{-}[r] & 111 \\
   & 001 \ar@{-}[ur] \ar@{-}[r] & 011  \ar@{-}[ur] & 
}
$$
should  change to 
\begin{center}
\includegraphics[scale=1.3]{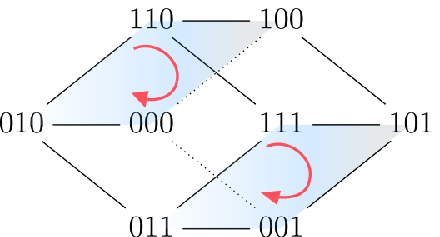}
\end{center}
Notice that in order to map the new transformed cube to the Khovanov cube with this changed position we have to rotate the two highlighted faces together with the maps at the edges. For a detailed example explaining this rotation after switching a crossing from negative to positive we refer to the example of the Hopf link in \S \ref{HopfExample}. 

\subsection{A generalized skein relation}

Consider two link diagrams $D^+$ and $D^-$ that differ on exactly one crossing, which is positive in the former and negative in the latter. Equivalently, we can see this crossing as the one  we choose to switch, thus obtaining one diagram from the other.
In order to understand the relation between $D^+$ and $D^-$ we will study the relation of $D^+$ (respectively $D^-$) to the two resolutions $D^+_0$ and $D^+_1$ (respectively $D^-_0$ and $D^-_1$), obtained respectively by the $0$-smoothing and the $1$-smoothing of the selected crossing, as in the following picture:

\vskip 0.5truecm
\begin{center}
\includegraphics[scale=0.5]{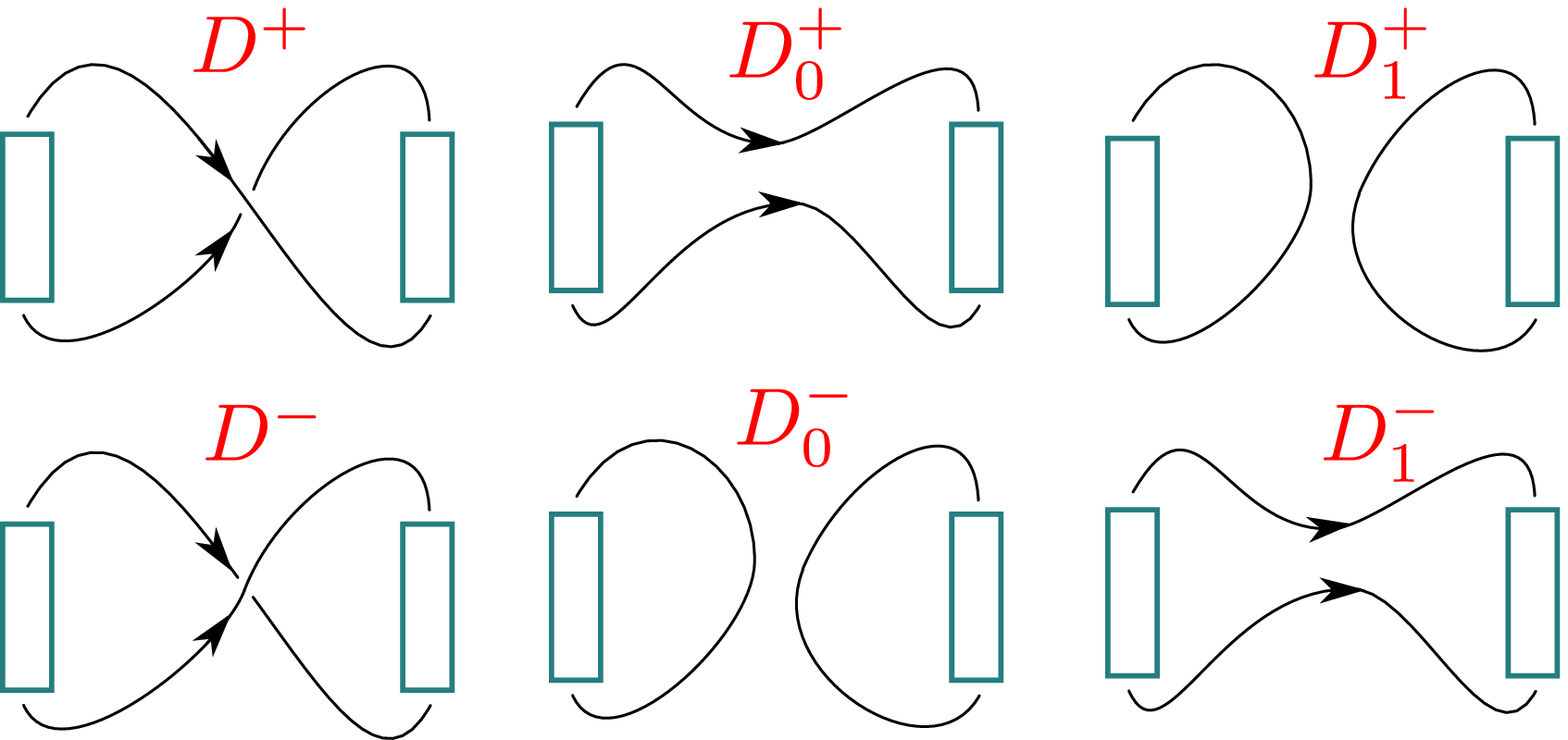}
\end{center}

Following \cite[Chapter 3]{Turner2006-ck}, we distinguish the following cases:\\
{\bf Case I:} the selected crossing is positive.
The diagram $D_0^+$
inherits an orientation from $D^+$ and for  $D_1^+$
we select an arbitrary orientation.  
Set 
\[
c^+:=\# \{\text{negative crossings in } D_1^+ \} - \# \{\text{negative crossings in } D^+\}.
\]
For each $j$ there is a short exact sequence:
\[
0 \rightarrow C^{i-c^+-1,j-3c^+-2}(D_1^+) \rightarrow C^{i,j}(D^+) 
\rightarrow 
C^{i,j-1}(D_0^+)
\rightarrow 0.
\]
{\bf Case II:} the selected crossing is negative. In this case
$D_1^-$
inherits its orientation from $D^-$
and for $D_0^-$
we select the same orientation as for $D_1^+$. 
Set 
\[
c^-:=\# \{\text{negative crossings in } D_0^{-} \} - \# \{\text{negative crossings in } D^-\}.
\]
For each $j$ there is a short exact sequence:
\[
0 \rightarrow C^{i,j+1}(D_1^-) \rightarrow {}
C^{i,j}(D^-)
\rightarrow 
C^{i-c^-,j-3c^- - 1}(D_0^-) 
\rightarrow 0.
\]

Now, we observe that $D_0^+ = D_1^-$ and $D_0^-=D_1^+$, after choosing the same orientation for the second pair.
We thus have
$c^+=c^-+1$,
and so
\[
 (i-2)-c^-=i-c^+-1 \quad \text{and}\quad (j-4)-3c^- -1=j-3c^- -5=j-3c^+-2.
 \]
Therefore,
the two short exact sequences above can be combined in a long one:  
\[
\xymatrix@C-=0.5cm{
0  \ar[r] & C^{i-2,j-3}(D_0^+) \ar[r]^{\psi_1} & 
C^{i-2,j-4}(D^-) \ar[r]^-{\phi_1}
  &  
C^{(i-2)-c^-,(j-4)-3c^- - 1}(D_0^-)
  \ar[r]& 0 
  \\
        0 \ar[r]& 
C^{i-c^+-1,j-3c^+-2}(D_0^-)
 \ar[r]^-{\phi_2}   \ar@{=}[urr] &
C^{i,j}(D^+)  \ar[r]^{\psi_2} & 
C^{i,j-1}(D_0^+) \ar[r]
& 0 
}
\]
\begin{lemma}
The above  exact sequence can be compactified to a 4-term exact sequence:
\begin{equation}
\label{combined}
\xymatrix@C-=0.6cm{
  0  \ar[r] & C^{i-2,j-3}(D_0^+) \ar[r]^{\psi_1} & 
C^{i-2,j-4}(D^-) \ar[r]^-{\phi} & 
C^{i,j}(D^+)  \ar[r]^{\psi_2} & 
C^{i,j-1}(D_0^+) \ar[r]
& 0  
}
\end{equation}
where $\phi=\phi_2 \circ \phi_1$.
\end{lemma}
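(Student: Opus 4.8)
The plan is to treat the displayed diagram as a splicing of the two short exact sequences along their common term, and to verify exactness of the resulting four-term sequence \eqref{combined} one position at a time. Write $W$ for the object sitting at the junction, namely
$$W := C^{(i-2)-c^-,\,(j-4)-3c^- - 1}(D_0^-) = C^{i-c^+-1,\,j-3c^+-2}(D_0^-),$$
the identification of these two graded vector spaces being exactly the content of the index equalities $(i-2)-c^-=i-c^+-1$ and $(j-4)-3c^- -1 = j-3c^+-2$ established just before the statement using $c^+=c^-+1$. Under this identification the top row reads $0 \to C^{i-2,j-3}(D_0^+)\xrightarrow{\psi_1} C^{i-2,j-4}(D^-)\xrightarrow{\phi_1} W \to 0$ and the bottom row reads $0 \to W \xrightarrow{\phi_2} C^{i,j}(D^+)\xrightarrow{\psi_2} C^{i,j-1}(D_0^+)\to 0$. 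In particular $\phi_1$ is surjective and $\phi_2$ is injective, so that $\phi=\phi_2\circ\phi_1$ is a well-defined map $C^{i-2,j-4}(D^-)\to C^{i,j}(D^+)$.

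First I would dispatch the two endpoints, which are immediate: $\psi_1$ is injective because the top sequence is exact at its left term, and $\psi_2$ is surjective because the bottom sequence is exact at its right term. This already gives exactness at $C^{i-2,j-3}(D_0^+)$ and at $C^{i,j-1}(D_0^+)$.

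Next I would verify exactness at the two middle terms by elementary kernel/image bookkeeping. At $C^{i-2,j-4}(D^-)$, since $\phi_2$ is injective we have $\ker\phi = \ker(\phi_2\circ\phi_1) = \ker\phi_1$, and $\ker\phi_1 = \operatorname{im}\psi_1$ by exactness of the top sequence; hence $\ker\phi = \operatorname{im}\psi_1$. At $C^{i,j}(D^+)$, since $\phi_1$ is surjective we have $\operatorname{im}\phi = \phi_2(\operatorname{im}\phi_1) = \phi_2(W) = \operatorname{im}\phi_2$, and $\operatorname{im}\phi_2 = \ker\psi_2$ by exactness of the bottom sequence; hence $\operatorname{im}\phi = \ker\psi_2$. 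Combined with the endpoint computations, this establishes exactness of \eqref{combined}.

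I expect no genuine obstacle here: once the junction object $W$ has been correctly identified through the grading shifts, the assertion is nothing more than the standard fact that two short exact sequences glued along a common term splice to a four-term exact sequence, and the verification is purely formal. The only point demanding care is the bookkeeping of the homological and quantum gradings in the identification of $W$, but this has already been carried out in the lines immediately preceding the lemma, so I would simply invoke it.
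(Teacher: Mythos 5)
Your proposal is correct and follows essentially the same route as the paper: the paper's proof likewise computes $\ker\phi=\ker\phi_1=\mathrm{im}\,\psi_1$ using injectivity of $\phi_2$, and $\mathrm{im}\,\phi=\mathrm{im}\,\phi_2=\ker\psi_2$ using surjectivity of $\phi_1$, i.e.\ the standard splicing of two short exact sequences along the common term. Your additional remarks on the endpoint exactness and on the identification of the junction object are just the implicit parts of the paper's argument made explicit.
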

\begin{proof}
Observe that 
$$
\ker \phi  =\{x \in C^{i-2,j-4}(D^-)\,|\, \phi_2 (\phi_1(x))=0\} = \{ x \in C^{i-2,j-4}(D^-)\,|\, \phi_1(x)=0\} = \ker \phi_1=\mathrm{im}\,\psi_1.
$$
Similarly, 
$$
\mathrm{im}\,\phi =\phi_2 \left( \phi_1 (C^{i-2,j-4}(D^-))\right) = \phi_2(C^{(i-2)-c^-,(j-4)-3c^- - 1}(D_0^-))= \mathrm{im}\,\phi_2=\ker \psi_2.
$$
\end{proof}

Equation (\ref{combined}) gives rise to  a double complex which is not zero on a parallel band of four lines and also only if $i \in \{-n_-(D^-),\ldots, n_+(D^+)\}$
(recall the definition of $C^{i,*}$ in \eqref{def of C} and note that
$n_-(D^-) > n_-(D_0^+)=n_-(D^+)$ and $n_+(D^-) = n_+(D_0^+)<n_+(D^+)$), so it behaves like a first quadrant double complex. On this parallel band the quantum degrees $j-3,j-4,j,j-1$ appear. For every fixed $j$ we will form  a spectral sequence out of this double complex and we will  denote elements on the $n$-th page by $E_{n,j}^{i,j}$, so that the second subindex shows the dependence on the quantum degree. For a good introduction to spectral sequences, the reader may refer to \cite[Chapter 5]{Weibel}.

\[
\xymatrix{
        & 0 \ar[d] & 0 \ar[d] & 
       \\
\ar[r]	& C^{i-2,j-3}(D_0^+) \ar[r]  \ar[d] &
          C^{i-1,j-3}(D_0^+) \ar[r]  \ar[d] &
\\ 
\ar[r]	& C^{i-2,j-4}(D^-) \ar[r] \ar[d] & 
          C^{i-1,j-4}(D^-) \ar[r] \ar[d] &
\\
\ar[r]
	&  C^{i,j}(D^+) \ar[r] \ar[d] &
	   C^{i+1,j}(D^+) \ar[r] \ar[d] &
\\
\ar[r]
	& C^{i,j-1} (D_0^+) \ar[r] \ar[d] & 
	  C^{i+1,j-1} (D_0^+) \ar[r] \ar[d] &
\\
	&  0 & 0  
}
\]

For the creation of the first vertical page $E_{1,j}^{i,j}$, we compute Kernel over Image along each row.  This coincides with taking the Khovanov homology in each position. Thus the first page measures how far each horizontal sequence is from being exact. The vertical arrows are preserved, and we obtain: 
{\tiny
\[
\xymatrix@!C{ 
 0 \ar[d]      & 0 \ar[d] & 0 \ar[d] & 0 \ar[d] 
       \\
KH^{i-3,j-3}(D_0^+) \ar[d]^{\alpha_{i-3,j}}	& KH^{i-2,j-3}(D_0^+) \ar[d]^{\alpha_{i-2,j}} & 
          KH^{i-1,j-3}(D_0^+) \ar[d]^{\alpha^{i-1,j}}   & KH^{i,j-3}(D_0^+) \ar[d]^{a_{i,j}} 
\\ 
KH^{i-3,j-4}(D^-)  \ar[d]^{\beta_{i-3,j}}
	& KH^{i-2,j-4}(D^-) \ar[d]^{\beta_{i-2,j}}  &  
          KH^{i-1,j-4}(D^-) \ar[d]^{\beta_{i-1},j}  &
           KH^{i,j-4}(D^-) \ar[d]^{\beta_{i,j}} 
\\
KH^{i-1,j}(D^+) \ar[d]^{\gamma_{i-1,j}} 	&
  KH^{i,j}(D^+) \ar[d]^{\gamma_{i,j}} &  
	   KH^{i+1,j}(D^+)  \ar[d]^{\gamma_{i+1,j}} &
      KH^{i+2,j}(D^+)  \ar[d]^{\gamma_{i+2,j}} 
\\
 KH^{i-1,j-1} (D_0^+)  \ar[d]	& KH^{i,j-1} (D_0^+)  \ar[d] &  
	  KH^{i+1,j-1} (D_0^+)  \ar[d] & KH^{i+2,j-1} (D_0^+)  \ar[d] 
\\
	0 &  0 & 0   & 0  
}
\]
}
In the above diagram we will denote the functions 
\begin{equation}
\label{seqCom}
\xymatrix{
0 \ar[r] &
KH^{i-2,j-3}(D_0^+) \ar[r]^-{\alpha_{i-2,j}} &
KH^{i-2,j-4}(D^-) \ar[r]^-{\beta_{i-2,j}} &
KH^{i,j}(D^+) \ar[r]^-{\gamma_{i,j}} &  
KH^{i,j-1}(D_0^+) \ar[r] &
0},
\end{equation}
where the second subindex $j$ illustrates that we are on the 
spectral sequence $E_{*,j}^{*,*}$.

For the creation of the second page $E_{2,j}^{i,j}$ (Table  \ref{tab:E2}), we take Kernel over Image along each column.  Thus, the second page measures how far the sequences of the first page are from being exact. Now, in each position we no longer get a link invariant, so
we will  use the notation $D_0^{+t}$ and $D_0^{+b}$ to differentiate between the copy of $D_0^+$ appearing in the top line and the copy of $D_0^+$ appearing in the bottom line. We obtain arrows that go two positions down and one left. 
We have that:
\begin{align}
\label{Edefect}
E_{2,j}^{i-2,j-3}(D_0^{+t}) &=\mathrm{ker}\alpha_{i-2,j}  
\\ \nonumber
E_{2,j}^{i-2,j-4}(D^-) &=\frac{\mathrm{ker}\beta_{i-2,j}}{\mathrm{im}\alpha_{i-2,j}} 
\\ \nonumber
E_{2,j}^{i,j}(D^+) &=\frac{\mathrm{ker}\gamma_{i,j}}{\mathrm{im}\beta_{i-2,j}} 
\\ \nonumber
E_{2,j}^{i,j-1}(D_0^{+b}) &=\mathrm{coker}\gamma_{i,j}.
\end{align}

We now create the third page $E_{3,j}^{i,j}$ (Table  \ref{tab:E3}) by taking 
Kernel over Image along each sequence of the $E_2$-page. We obtain arrows that go 3 positions down and 2 left. 
By looking at the $E_4$-page, we immediately obtain that all positions in the $E_3$-page are exact. Moreover, we observe that
$E^{i-1,j}_{3,j}(D^+) \cong E^{i,j}_{3,j}(D^+) \cong E^{i-2,j-4}_{3,j}(D^-) \cong 0$, whence we deduce that the positions marked in grey in the $E_2$-page are exact.

\begin{table}
  \caption{The $E_2$-page; \label{tab:E2} the positions marked in grey are exact.}

\begin{tikzcd}[/tikz/cells={/tikz/nodes={shape=asymmetrical
  rectangle,text width=2.0cm,text height=1.6ex,text depth=0.3ex,align=center}}]
&         &  &0  \arrow[ddl]    & 0  \arrow[ddl]  
\\
&         &   & 0 \ar[ddl]  
       \\
 &    &
 E_{2,j}^{i-2,j-3}(D_0^{+t}) \arrow[ddl]
 & 
 E_{2,j}^{i-1,j-3}(D_0^{+t}) \arrow[ddl] 
\\ 
 & 
  &
  \colorbox{gray!20}{
$E_{2,j}^{i-2,j-4} (D^-)$ } \arrow[ddl]  
\\
 &  \colorbox{gray!20}{ $E_{2,j}^{i-1,j}(D^+)$ } \arrow[ddl]&
   \colorbox{gray!20}{$E_{2,j}^{i,j}(D^+)$} \arrow[ddl] 
\\
 &
 E_{2,j}^{i-1,j-1} (D_0^{+b})  \arrow[ddl]  
    \\
   0 &  0       
       \\
   0    &        
\end{tikzcd}
\end{table}

By looking at the second, third and fourth  pages we obtain the following exact sequences:
\begin{equation}
\label{ses-pages}
\xymatrix{
0 \ar[r] & 
E_{3,j}^{i-1,j-3}(D_0^{+t}) \ar[r] & 
E_{3,j}^{i-1,j-1}(D_0^{+b}) \ar[r] &0
&  \\
  0 \ar[r] & 
   E_{3,j}^{i-2,j-3}(D_0^{+t}) \ar[r]
  &E_{2,j}^{i-2,j-3}(D_0^{+t}) \ar[r]
  & 
  E_{2,j}^{i-1,j} (D^+) \ar[r] & 
  0
  \\
  0 \ar[r] &
  E_{2,j}^{i-2,j-4}(D^-) \ar[r] &
  E_{2,j}^{i-1,j-1}(D_0^{+b}) \ar[r] &
  E_{3,j}^{i-1,j-1}(D_0^{+b}) \ar[r] &
  0
}
\end{equation}

\begin{table}
\caption{The $E_3$-page; \label{tab:E3} all positions are exact.}
\begin{tikzpicture}[baseline= (a).base]
\node[scale=.5] (a) at (0,0){
\begin{tikzcd}[row sep=0.7cm, column sep=0.5cm, /tikz/cells={/tikz/nodes={shape=asymmetrical
  rectangle,text width=3.5cm,text height=3ex,text depth=2.5ex,align=center}}]
& &         &   &   & 0 \ar[dddll]  & 0 \ar[dddll] &  
 \\
 & &         &   &    & 0  \ar[dddll] & & 
\\
 & &         &   & 0  \ar[dddll]  & 0  \ar[dddll] & &
       \\
 &    &
 & 
E_{3,j}^{i-2,j-3}(D_0^{+t}) \ar[dddll]  & E_{3,j}^{i-1,j-3}(D_0^{+t}) \ar[dddll] 
 &  &
\\ 
 & & 
  &
E_{3,j}^{i-2,j-4} (D^-) \ar[dddll]  &  
  &
&&
\\
& & E_{3,j}^{i-1,j}(D^+) \ar[dddll] &
   E_{3,j}^{i,j}(D^+) \ar[dddll] &  
    &
     & &
\\
& E_{3,j}^{i-2,j-1} (D_0^{+b}) &
 E_{3,j}^{i-1,j-1} (D_0^{+b})  \ar[dddll]  &   &  
     &   & &
    \\
 & 0    &         &   &   &   & &
       \\
0 &  0    &         &   &   &   & &
        \\
0 &      &         &   &   &   & &
\end{tikzcd}
};
\end{tikzpicture}
\end{table}

From now on, abusing notation, we will write $\mathrm{Kh}(E_{n}(D^*))$ for
$\sum_{i,j} t^i q^j {\rm dim}(E^{i,j}_{n,j}(D^*))$. 
The short exact sequences above imply that
\begin{align}
tq^3 \mathrm{Kh}(E_{3}(D_0^{+t})) &=tq\mathrm{Kh}(E_{3}(D_0^{+b}))
\,,\label{subs3} \\
t \mathrm{Kh}(E_{2}(D^+)) &=t^2q^3\mathrm{Kh}(E_{2}(D_0^{+t}))
-t^2q^3  \mathrm{Kh}(E_{3}(D_0^{+t}))\,,\label{subs1} \\
t^2 q^4 \mathrm{Kh}(E_{2}(D^{-})) &=tq \mathrm{Kh}(E_{2}(D_0^{+b}))
- tq \mathrm{Kh}(E_{3}(D_0^{+b})).\label{subs2}
\end{align}
Now, Equation \eqref{seqCom} combined with \eqref{Edefect} yields
\begin{equation}
(t^2q^3 -q) \mathrm{Kh}(D_0^+)
 -
t^2q^4 
\mathrm{Kh}(D^-) +
\mathrm{Kh}(D^+)
=C\,,
\end{equation}
where
$$
C= 
t^2q^3  \mathrm{Kh}(E_{2}(D_0^{+t}))
- t^2q^4
\mathrm{Kh}(E_{2}(D^-)) +
\mathrm{Kh}(E_{2}(D^+))
-q \mathrm{Kh}(E_{2}( D_0^{+b})) .
$$
Note again that $\mathrm{Kh}(E_{2}(D_0^{+t}))$ is not necessarily equal to $\mathrm{Kh}(E_{2}(D_0^{+b}))$. Substituting $\mathrm{Kh}(E_{2}(D^+))$ and $\mathrm{Kh}(E_{2}(D^-))$ in the above formula with the use of \eqref{subs1} and \eqref{subs2}
respectively, we obtain
$$C= 
(t^2q^3+tq^3)\mathrm{Kh}(E_{2}(D_0^{+t}))
-(q+tq)\mathrm{Kh}(E_{2}( D_0^{+b}))
-tq^3 \mathrm{Kh}(E_{3}(D_0^{+t})) +tq \mathrm{Kh}(E_{3}(D_0^{+b}))$$
The last two terms cancel out because of \eqref{subs3}, and we conclude that
\begin{equation}
\label{Cdefinit}
C= ( t+1)tq^3\mathrm{Kh}(E_2(D_0^{+t}))
-(t+1)q\mathrm{Kh}(E_2(D_0^{+b}))=:C(D_0^+,D^-,D^+) .
\end{equation}
We have thus proved the main result of this section:

\begin{theorem}
\label{skeinth}
The Khovanov polynomial $\mathrm{Kh}$ satisfies the generalized skein relation
\begin{equation}
\label{skein-rel-bar}
\boxed{
(t^2q^3 -q) \mathrm{Kh}(D_0^+)
 -
t^2q^4 
\mathrm{Kh}(D^-) +
\mathrm{Kh}(D^+)=C(D_0^+,D^-,D^+)
}
\end{equation}
where $C(D_0^+,D^-,D^+)$ is given by Equation \eqref{Cdefinit}. The generalized skein relation can be also written more symmetrically as
\begin{equation}
\label{skein-rel-bar2}
\boxed{
t^{-1}q^{-2}\mathrm{Kh}(D^+)
 -
tq^2 
\mathrm{Kh}(D^-) 
=(t^{-1}q^{-1}-tq ) \mathrm{Kh}(D_0^+) +C^{\rm sym}(D_0^+,D^-,D^+)
}
\end{equation}
where
\begin{equation}
C^{\rm sym}(D_0^+,D^-,D^+) =  (t+1)q\mathrm{Kh}(E_2(D_0^{+t}))
-(t^{-1}+1)q^{-1}\mathrm{Kh}(E_2(D_0^{+b})) .
\end{equation}
 \end{theorem}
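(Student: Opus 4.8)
The plan is to derive the two boxed skein relations directly from the accumulated computation preceding the statement, since essentially all the work has been done: the key identity
\[
(t^2q^3 -q)\,\mathrm{Kh}(D_0^+) - t^2q^4\,\mathrm{Kh}(D^-) + \mathrm{Kh}(D^+) = C
\]
has been established by combining the four-term exact sequence \eqref{seqCom} with the defect terms \eqref{Edefect}, and $C$ has been simplified to the expression in \eqref{Cdefinit}. Thus the first boxed equation \eqref{skein-rel-bar} is simply the restatement of this identity with $C = C(D_0^+,D^-,D^+)$, and the substance of the proof is to verify the algebraic bookkeeping that produces the symmetric form \eqref{skein-rel-bar2}.

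First I would justify the master identity carefully. Equation \eqref{seqCom} is a four-term sequence of graded vector spaces whose failure of exactness is measured precisely by the $E_2$-terms in \eqref{Edefect}; writing out the alternating sum of graded dimensions (with the appropriate sign $(-1)^i$ absorbed into powers of $t$, and the quantum shifts absorbed into powers of $q$ as dictated by the superscripts $i-2,j-3$, $i-2,j-4$, $i,j$, $i,j-1$ appearing in \eqref{seqCom}) gives the left-hand side equated to the alternating sum of the $E_2$-defects, which is exactly $C$. This is the standard ``Euler characteristic is preserved by taking homology'' principle applied term by term in each quantum degree $j$ and then summed over $j$; I would state it in that form rather than recompute it.

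Next I would reduce $C$ to \eqref{Cdefinit}. This uses the three short exact sequences in \eqref{ses-pages} coming from the comparison of the $E_2$-, $E_3$-, and $E_4$-pages, which yield the scalar relations \eqref{subs3}, \eqref{subs1}, \eqref{subs2}. Substituting \eqref{subs1} and \eqref{subs2} into the raw expression for $C$ eliminates $\mathrm{Kh}(E_2(D^+))$ and $\mathrm{Kh}(E_2(D^-))$; the two resulting $E_3$-terms then cancel by \eqref{subs3}, leaving only the two $E_2(D_0^{+t})$ and $E_2(D_0^{+b})$ terms with coefficients $(t+1)tq^3$ and $-(t+1)q$. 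This establishes \eqref{skein-rel-bar}.

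Finally, for the symmetric form I would multiply \eqref{skein-rel-bar} through by $t^{-1}q^{-2}$: the $\mathrm{Kh}(D^+)$ term becomes $t^{-1}q^{-2}\mathrm{Kh}(D^+)$, the $-t^2q^4\mathrm{Kh}(D^-)$ term becomes $-tq^2\mathrm{Kh}(D^-)$, and the coefficient $(t^2q^3-q)t^{-1}q^{-2}=tq-t^{-1}q^{-1}$ of $\mathrm{Kh}(D_0^+)$ moves to the right-hand side as $(t^{-1}q^{-1}-tq)\mathrm{Kh}(D_0^+)$; the defect $C$ is scaled to $C^{\mathrm{sym}} = t^{-1}q^{-2}C$, which with \eqref{Cdefinit} gives exactly $(t+1)q\,\mathrm{Kh}(E_2(D_0^{+t})) - (t^{-1}+1)q^{-1}\,\mathrm{Kh}(E_2(D_0^{+b}))$. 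I expect the only genuine subtlety — the main obstacle — to be the sign and shift bookkeeping in step one: one must check that the homological index offsets and the quantum-grading shifts recorded in \eqref{seqCom} produce precisely the monomial coefficients $t^2q^3$, $-t^2q^4$, $1$, $-q$, rather than some permuted or mis-shifted variant. Everything after that is routine scalar algebra in $\Q[t^{\pm1},q^{\pm1}]$.
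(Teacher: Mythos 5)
Your proposal is correct and follows essentially the same route as the paper: the master identity comes from taking the alternating sum of graded dimensions of the four-term sequence \eqref{seqCom} against the $E_2$-defects \eqref{Edefect}, the defect $C$ is reduced to \eqref{Cdefinit} via \eqref{subs1}, \eqref{subs2} and the cancellation \eqref{subs3}, and the symmetric form is obtained by multiplying through by $t^{-1}q^{-2}$. The only cosmetic difference is that the signs in the Euler-characteristic identity come from the alternating positions in the four-term sequence (with the index offsets $i-2$, $j-3$, etc.\ converted into the monomials $t^2q^3$, $t^2q^4$, $1$, $q$), not from $(-1)^i$, but your final coefficients are exactly right.
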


\begin{remark} 
\label{remarkCvan}
\rm
We observe that for $t=-1$, we have $C(D_0^+,D^-,D^+)=0$, and so
 we recover the usual skein relation for the unnormalized Jones polynomial (and for the Jones polynomial as well):
\[
q^{-2}\hat{J}(D^+) - q^2 \hat{J}(D^-)  = (q^{-1} - q) \hat{J}(D_0^+). 
\]
\end{remark}

\begin{remark}\rm
The skein relation given in Theorem \ref{skeinth} is not local, in contrast with the usual skein relations in literature. This means that the quantity $C(D_0^+,D^-,D^+)$  depends on all remaining diagrams $D_0^+,D^-,D^+$.
\end{remark}

\subsection{Examples}\label{sec-Examples}
In this subsection we will apply Theorem \ref{skeinth} to the Hopf link and the left-handed trefoil knot.

\subsubsection{The Hopf Link}
\label{HopfExample}
\begin{figure}[h]
\includegraphics[scale=0.7]{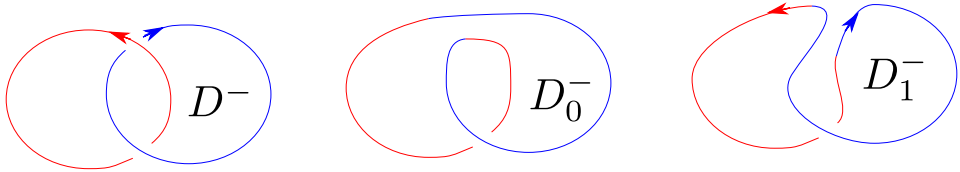}
\caption{Hopf link with two negative crossings and resolution of one crossing \label{fig:HopfLink}}
\end{figure}
First we will study the Hopf link with two negative crossings (Figure \ref{fig:HopfLink}). 

Using the invariance of classical Khovanov theory under Reidemeister moves we see that $D_0^+=D_1^-$ has the Khovanov homology of the unknot:
\[
KH^{0,1}(D_0^+)= \langle e\otimes e \rangle \cong \Q \quad \text{ and } \quad
KH^{0,-1}(D_0^+)= \langle x \otimes e \rangle \cong \Q.
\]

The Khovanov homology of $D^-$ is given in \cite[Example 3.7]{Turner2006-ck}, while $D^+$ is the union of two unlinked unknots. The Khovanov homology of the two links is  given in the following tables:
\begin{center}
$KH(D^-)$:
\newcolumntype{g}{>{\columncolor{Gray}}c}
\begin{tabular}{|g|c|c|c|}
\hline
\rowcolor{LightCyan}
$j \backslash i$ & -2  & 0 \\
\hline
0   &    &     $\mathbb{Q}$ \\
-2  & &    $\mathbb{Q}$  \\
-4  & $\mathbb{Q}$     & \\
-6  &  $\mathbb{Q}$   &   \\
\hline
\end{tabular}
\qquad $KH(D^+)$:
\begin{tabular}{|g|c|}
\hline
\rowcolor{LightCyan}
$j \backslash i$ &  0 \\
\hline
2  & $\mathbb{Q}$ \\
0  &  $\mathbb{Q}^2$ \\
-2 &  $\mathbb{Q}$\\
\hline
\end{tabular}
\end{center}

We thus have:\\
\begin{center}
\newcolumntype{g}{>{\columncolor{Gray}}c}
\begin{tabular}{|g|c|c|c|c|}
\hline
\rowcolor{LightCyan}
$(i,j)$ & $KH^{i-2,j-3}(D_0^+)$ &
$KH^{i-2,j-4}(D^-)$ &
$KH^{i,j}(D^+)$ &
$KH^{i,j-1}(D_0^+)$\\
\hline
(0,2) & &   &  $\mathbb{Q}$ & $\mathbb{Q}$ \\
(0,0) & & $\mathbb{Q}$ & $\mathbb{Q}^2$ & $\mathbb{Q}$\\
(0,-2) &  & $\mathbb{Q}$ &  $\mathbb{Q}$ & \\
(2,4)& $\mathbb{Q}$ & $\mathbb{Q}$ &   &  \\
(2,2)& $\mathbb{Q}$ & $\mathbb{Q}$ &   &  \\
\hline
\end{tabular}
\end{center}
and the following exact sequence, given in \eqref{seqCom}, is exact for every $i,j$:
\[
0 \rightarrow 
KH^{i-2,j-3}(D_0^+)
\rightarrow
KH^{i-2,j-4}(D^-)
\rightarrow
KH^{i,j}(D^+)
\rightarrow
KH^{i,j-1}(D_0^+)
\rightarrow 
0.
\]
Therefore, in this example, the skein relation \eqref{skein-rel-bar} for the Khovanov polynomial holds with  $C(D_0^+,D^-,D^+)=0$.

\subsubsection{The left-handed trefoil knot}
In this example we will study the trefoil knot $D^-$ with three negative crossings.
Then $D^+$ is the unknot and $D_0^+$ is the Hopf link. The Khovanov homology of the three links is given in the following tables: 
\begin{center}
$KH(D^-)$:
\newcolumntype{g}{>{\columncolor{Gray}}c}
\begin{tabular}{|g|c|c|c|}
\hline
\rowcolor{LightCyan}
$j \backslash i$ & 0 & -2 & -3 \\
\hline
-9   &    &  &   $\mathbb{Q}$ \\
-5  & & $\mathbb{Q}$ &    \\
-3  & $\mathbb{Q}$   &  & \\
-1  &  $\mathbb{Q}$ &  &   \\
\hline
\end{tabular}
\qquad $KH(D^+)$:
\begin{tabular}{|g|c|}
\hline
\rowcolor{LightCyan}
$j \backslash i$ &  0 \\
\hline
1  & $\mathbb{Q}$ \\
-1 &  $\mathbb{Q}$\\
\hline
\end{tabular}
\qquad
$KH(D_0^+)$:
\begin{tabular}{|g|c|c|c|}
\hline
\rowcolor{LightCyan}
$j \backslash i$ & -2  & 0 \\
\hline
0   &      &   $\mathbb{Q}$ \\
-2  &  &  $\mathbb{Q}$  \\
-4  & $\mathbb{Q}$     & \\
-6  &  $\mathbb{Q}$   &   \\
\hline
\end{tabular}
\end{center}
From the above tables we compute
\begin{align*}
\mathrm{Kh}(D^-) &=\frac{1}{t^3q^9 }+\frac{1}{t^2q^5 }+\frac{1}{q^3}+\frac{1}{q} \\
\mathrm{Kh}(D^+) &=\frac{1}{q} +q
\\
\mathrm{Kh}(D_0^+) &= \frac{1}{ t^2q^6}+\frac{1}{t^2q^4 }+\frac{1}{q^2}+1
\end{align*}

We thus have:\\
\begin{center}
\newcolumntype{g}{>{\columncolor{Gray}}c}
\begin{tabular}{|g|c|c|c|c|}
\hline
\rowcolor{LightCyan}
$(i,j)$ & $KH^{i-2,j-3}(D_0^+)$ &
$KH^{i-2,j-4}(D^-)$ &
$KH^{i,j}(D^+)$ &
$KH^{i,j-1}(D_0^+)$\\
\hline
 (-2,-3)& &   &   & $\mathbb{Q}$ \\
 (-2,-5)& &  & & $\mathbb{Q}$\\
 (-1,-5)&  & $\mathbb{Q}$ &   & \\
 (0,1)& &  &  $\mathbb{Q}$ &$\mathbb{Q}$   \\
 (0,-1)& $\mathbb{Q}$ & $\mathbb{Q}$ &$\mathbb{Q}$   & $\mathbb{Q}$ \\
  (0,-3)& $\mathbb{Q}$ &  &   &  \\
   (2,3)& $\mathbb{Q}$ & $\mathbb{Q}$ &   &  \\
  (2,1)& $\mathbb{Q}$ & $\mathbb{Q}$ &   &  \\
  \hline
\end{tabular}
\end{center}

By looking at the table above, we  only need to 
study the cases where $j \in \{-5,-3,-1,1,3\}$.\\

{\bf Case $j=-5$:}
In the first page, we have the following sequences of Khovanov homology:
\[
\xymatrix{
0 \ar[d] &  & & 0 \ar[d] \\
  0=KH^{-4,-8}(D_0^+)  \ar[d] &  & &
  0=KH^{-3,-8}(D_0^+) \ar[d] \\
  0= KH^{-4,-9}(D^-) \ar[d] & & E_{2,-5}^{-3,-9}(D^-) \ar[ldd]|{\cong} 
  \ar@{}[r]|-*[@]{\cong}&
  \mathbb{Q} \cong KH^{-3,-9}(D^-) \ar[d]  \\
  0=KH^{-2,-5}(D^+) \ar[d] &  & &
  0=KH^{-1,-5}(D^+) \ar[d] \\
  \mathbb{Q} \cong KH^{-2,-6}(D_0^+) \ar[d]
  \ar@{}[r]|-*[@]{\cong}
  & E_{2,-5}^{-2,-6}(D_0^{+b}) & & 0= KH^{-1,-6}(D_0^+) \ar[d] \\
  0 & & & 0
}
\]
The above diagonal morphism fits within the third sequence in \eqref{ses-pages} for $i=-1,j=-5$  
\[
\xymatrix{
  0 \ar[r] &
  E_{2,-5}^{-3,-9}(D^-)\cong \Q \ar[r] &
  E_{2,-5}^{-2,-6}(D_0^{+b}) \cong \Q \ar[r] &
  E_{3,-5}^{-2,-6}(D_0^{+b}) \ar[r] & 
  0
}
\]
whence $E_{3,-5}^{-2,-6}(D_0^{+b})=0$.

{\bf Case $j=-3$:}
In the first page, we have the following sequences of Khovanov homology:
\[
\xymatrix{
0 \ar[d] &  & & 0 \ar[d] 
\\
  0=KH^{-4,-6}(D_0^+)  \ar[d] &  & &
  0=KH^{-3,-6}(D_0^+) \ar[d] 
\\
  0= KH^{-4,-7}(D^-) \ar[d] & & E_{2,-3}^{-3,-7}(D^-) \ar[ldd]|{} 
  \ar@{}[r]|-*[@]{\cong}&
  0=KH^{-3,-7}(D^-) \ar[d]  
  \\
  0=KH^{-2,-3}(D^+) \ar[d] &  & &
  0=KH^{-1,-3}(D^+) \ar[d] 
  \\
  \mathbb{Q} \cong KH^{-2,-4}(D_0^+) \ar[d]
  \ar@{}[r]|-*[@]{\cong}
  & E_{2,-3}^{-2,-4}(D_0^{+b}) & & 0= KH^{-1,-4}(D_0^+) \ar[d] \\
  0 & & & 0
}
\]
The above diagonal morphism fits within the third sequence in \eqref{ses-pages} for $i=-1,j=-3$  
\[
\xymatrix{
  0 \ar[r] &
  E_{2,-3}^{-3,-7}(D^-)\cong 0 \ar[r] &
  E_{2,-3}^{-2,-4}(D_0^{+b}) \cong \Q \ar[r] &
  E_{3,-3}^{-2,-4}(D_0^{+b})  \ar[r] & 
  0
}
\]
whence $E_{3,-3}^{-2,-4}(D_0^{+b})\cong \Q$.

We also have the following sequences of Khovanov homology:
\[
\xymatrix{
0 \ar[d] &  & & 0 \ar[d] \\
  0=KH^{-3,-6}(D_0^+)  \ar[d] &  & E_{2,-3}^{-2,-6}(D_0^{+t})   \ar@{}[r]|-*[@]{\cong}\ar[ldd]|{} &
  \Q=KH^{-2,-6}(D_0^+) \ar[d] \\
  0= KH^{-3,-7}(D^-) \ar[d] & & 
&
  0=KH^{-2,-7}(D^-) \ar[d]  \\
  0=KH^{-1,-3}(D^+) \ar[d] \ar@{}[r]|-*[@]{\cong} & E_{2,-3}^{-1,-3}(D^+) & &
  0=KH^{0,-3}(D^+) \ar[d]  \\
  0 = KH^{-1,-4}(D_0^+) \ar[d]
  &  & & 0= KH^{0,-4}(D_0^+) \ar[d] \\
  0 & & & 0
}
\]
The above diagonal morphism fits within the second sequence in \eqref{ses-pages} for $i=0,j=-3$  
\[
\xymatrix{
  0 \ar[r] &
  E_{3,-3}^{-2,-6}(D_0^{+t}) \ar[r] &
  E_{2,-3}^{-2,-6}(D_0^{+t}) \cong \Q \ar[r] &
  E_{2,-3}^{-1,-3}(D^+) \cong 0 \ar[r] & 
  0
}
\]
whence $E_{3,-3}^{-2,-6}(D_0^{+t})\cong \Q$.

$ $\\
{\bf Cases $j \in \{-1,1,3\}$:} In these cases all vertical sequences are exact so the corresponding elements in the second page are all zero. 
\[
\xymatrix{
  0 \ar[d] \\
  KH^{-2,-4}(D_0^+)\cong \mathbb{Q} \ar[d]
\\
KH^{-2,-5}(D^-) \cong \mathbb{Q} \ar[d]
\\
KH^{0,-1}(D^+) \cong \mathbb{Q} \ar[d]
\\
KH^{0,-2}(D_0^+) \cong \mathbb{Q} \ar[d] \\
0
}
\qquad
\vrule
\qquad
\xymatrix{
 0 \ar[d] \\
  KH^{0,-2}(D_0^+)\cong \mathbb{Q} \ar[d]
\\
KH^{0,-3}(D^-) \cong \mathbb{Q} \ar[d]
\\
KH^{2,1}(D^+) = 0 \ar[d]
\\
KH^{2,0}(D_0^+) = 0 \ar[d] \\
0 
}
\qquad
\vrule
\qquad
\xymatrix{
 0 \ar[d] \\
  KH^{0,0}(D_0^+)\cong \mathbb{Q} \ar[d]
\\
KH^{0,-1}(D^-) \cong \mathbb{Q} \ar[d]
\\
KH^{2,3}(D^+) = 0 \ar[d]
\\
KH^{2,2}(D_0^+) = 0 \ar[d] \\
0 
}
\]
There is also a sequence where the Khovanov homology of $D^-$ is equal to $0$:
\[
  \xymatrix{
 0 \ar[d] \\
 KH^{-2,-2}(D_0^+)= 0  \ar[d]
\\
 KH^{-2,-3}(D^-) = 0 \ar[d]
\\
 KH^{0,1}(D^+) \cong \mathbb{Q} \ar[d]
\\
 KH^{0,0}(D_0^+) \cong \mathbb{Q} \ar[d] \\
 0 
}
\]

We now compute:
$$
\mathrm{Kh}(E_{2}(D_0^{+b}))=\frac{1}{t^2q^6} + \frac{1}{t^2q^4}
\quad \text{and}\quad
\mathrm{Kh}(E_{2}(D_0^{+t}) )= \frac{1}{t^2q^6}
$$
and so 
\begin{eqnarray*}
C(D_0^+,D^-,D^+)&=&(t+1)q \left(tq^2\mathrm{Kh}(E_{2}(D_0^{+t}) )-
\mathrm{Kh}(E_{2}(D_0^{+b})\right)\\
&=& (t+1)q\left( \frac{1}{tq^4} -\frac{1}{t^2q^6} - \frac{1}{t^2q^4}\right)\\
&=& (t+1)\left( \frac{tq^2-1-q^2}{t^2q^5}\right).
\end{eqnarray*}
Moreover, we have 
\begin{eqnarray*}
(t^2q^3 -q) \mathrm{Kh}(D_0^+)
 -
 t^2q^4
\mathrm{Kh}(D^-) +
\mathrm{Kh}(D^+)&=&
\frac{1}{q^3}+\frac{1}{q}+t^2q+t^2q^3-\frac{1}{t^2q^5}-\frac{1}{t^2q^3}-\frac{1}{q}-q\\
& &-\frac{1}{tq^5}-\frac{1}{q}-t^2q-t^2q^3+\frac{1}{q}+q\\
&=& \frac{1}{q^3}-\frac{1}{t^2q^5}-\frac{1}{t^2q^3}-\frac{1}{tq^5}\\
&=& \frac{t^2q^2-1-q^2-t}{t^2q^5}\\
&=& (t+1)\left( \frac{tq^2-q^2-1}{t^2q^5}\right).
\end{eqnarray*}
Therefore, Equation \eqref{skein-rel-bar} is satisfied.

\section{A categorification of the $\theta$-invariant}

\subsection{The $\theta$-invariant} Our initial motivation for looking for a skein relation for the Khovanov homology was our desire to categorify the $\theta$-invariant, a skein link invariant that generalizes the Jones polynomial.  The $\theta$-invariant is a $2$-variable polynomial invariant defined in \cite{GoLa}, and is obtained as a specialization of the $3$-variable polynomial skein link invariant $\Theta$ introduced in \cite{CJKL}, in the same way that the Jones polynomial is obtained as a specialization of the HOMFLYPT polynomial. The invariants $\Theta$ and $\theta$ are generalizations of invariants obtained from Markov traces on  the Yokonuma--Hecke algebra of type $A$ and the Framization of the Temperley--Lieb algebra respectively using Jones's method (see \cite{CJKL, GJKL} for these algebraic constructions).

Let $\mathcal{L}$ denote the set of oriented links. Let $L \in \mathcal{L}$ with components $K_1, \ldots K_r$ ($r \geq 1$). Given a diagram $D$ of $L$, we will call a crossing \emph{mixed} if it is between two different components of $D$. We will write
$L=\sqcup_{i=1}^r K_i$ if there exists a diagram of $L$ without mixed crossings, that is, $L$ is a union of $r$ unlinked knots. 
We can define $\theta$ as follows \cite[Theorem 1]{GoLa}:

\begin{theorem}
Let $q,E$ be indeterminates. There exists a unique ambient isotopy invariant
$$\theta: \mathcal{L} \rightarrow \C[q^{\pm 1}, E^{\pm 1}]$$ defined by the following rules:\\
(a) For all $r \geq 1$, we have 
\begin{equation}\label{theta1}
\theta(\sqcup_{i=1}^r K_i) = E^{1-r} J(\sqcup_{i=1}^r K_i).
\end{equation}
(b) On mixed crossings the skein relation of the Jones polynomial holds, that is,
\begin{equation}\label{theta2}
 q^{-2}\theta(L_+) - q^2 \theta(L_-)=(q^{-1}-q)\theta(L_0)
 \end{equation}
where $L_+,L_-,L_0$ is a Conway triple.
\end{theorem}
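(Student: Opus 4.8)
The plan is to follow the recursive method of Lickorish--Millett \cite{Lickorish1987-am}, in the form adapted for mixed crossings by Kauffman--Lambropoulou \cite{Kauffman2017-ra}. The two assertions are handled separately: uniqueness is a short induction, while existence requires constructing $\theta$ diagrammatically and checking that it is well defined. The key structural observation, used throughout, is that the oriented smoothing $L_0$ of a \emph{mixed} crossing between two distinct components $K_a, K_b$ joins them into a single component, so $L_0$ has one fewer component than $L_\pm$; this is the handle for an induction on the number $r$ of components.

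For \emph{uniqueness}, suppose $\theta_1, \theta_2$ both satisfy (a) and (b). Solving the skein relation (b) for $\theta(L_+)$ (or $\theta(L_-)$) expresses the value on a link in terms of its value on the switched diagram together with its value on $L_0$, which has fewer components. I would then induct, primarily on $r$ and secondarily on the number of mixed crossings that must be switched to reach a descending stack. By rule (a) both invariants agree on unions of unlinked knots, and the skein relation propagates this agreement upward, so $\theta_1 = \theta_2$.

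For \emph{existence}, I would \emph{define} $\theta$ by the same recursion and prove it is well defined. First I note that switching mixed crossings turns any diagram into a \emph{descending stack}: after fixing an order $K_1, \ldots, K_r$ on the components, each mixed crossing is switched so that $K_i$ passes over $K_j$ whenever $i < j$. Such a stack separates into a split union $\sqcup_{i=1}^r K_i$ of the individual knots—the self-crossings are never touched—so rule (a) assigns it the value $E^{1-r} J(\sqcup_i K_i)$. Since each switch is governed by (b) and each smoothing term $L_0$ has fewer components, the double induction above assigns $\theta$ a value on every diagram.

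The hard part is \emph{well-definedness}, i.e.\ independence from all auxiliary choices: (i) the order in which the mixed crossings are switched and smoothed, (ii) the chosen ordering of the components, and (iii) the particular diagram $D$ of $L$, which gives Reidemeister invariance. For (i) and (iii) I would argue as in Lickorish--Millett, comparing two recursion trees and using the inductive hypothesis on the lower-component smoothings $L_0$ to show both trees return the same Laurent polynomial; the subtle point is that, since (b) applies only to mixed crossings, Reidemeister moves acting on \emph{self}-crossings must be absorbed entirely by the invariance of $J$ in rule (a), while moves on mixed crossings are handled by (b). The independence from the ordering of the components, (ii), is the most delicate step and is where I expect the real obstacle to lie: swapping two adjacent components in the stack changes the set of switched crossings, and one must verify, via repeated application of (b)—again reducing to links with fewer components—that the resulting value in $\C[q^{\pm 1}, E^{\pm 1}]$ is unchanged. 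Confirming that every admissible recursion yields one and the same element is the bulk of the work.
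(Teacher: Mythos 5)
The paper does not actually prove this theorem: it is imported verbatim from Goundaroulis--Lambropoulou \cite{GoLa}, and the surrounding text records three independent proofs of existence --- via the tied-links invariant of Aicardi--Juyumaya in \cite{CJKL}, diagrammatically via descending stacks in \cite{Kauffman2017-ra}, and algebraically as a Markov trace on the Framization of the Temperley--Lieb algebra (for $E=1/d$). Your outline is precisely the second of these, so the strategy is sound and is the one the rest of the paper relies on (Section 3.2 reproduces the descending-stack algorithm). Your uniqueness argument is fine as stated: the double induction (primarily on the number $r$ of components, secondarily on the distance to a descending stack), anchored by rule (a) on split unions, does force $\theta_1=\theta_2$.

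The gap is in existence: you have named the difficulties but not resolved any of them, and those verifications are the entire content of the theorem. Concretely: (1) independence of the order in which the marked mixed crossings are switched and smoothed requires the standard computation comparing the four diagrams obtained by processing two crossings in either order; (2) independence of the basepoints and of the component ordering reduces to transpositions of adjacent components and is a long explicit calculation in \cite{Lickorish1987-am} and \cite{Kauffman2017-ra}; (3) Reidemeister invariance does not split cleanly into ``self-crossings absorbed by $J$'' versus ``mixed crossings handled by (b)'' --- a Reidemeister II or III move involving strands of two different components creates or destroys mixed crossings, hence changes the entire recursion tree, and one must verify that the old and new trees evaluate to the same element of $\C[q^{\pm 1},E^{\pm 1}]$. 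As written, your proposal establishes uniqueness and identifies where well-definedness could fail, but proves well-definedness nowhere; it is a correct plan rather than a proof. If you want to sidestep this combinatorial labour entirely, the algebraic construction via the Markov trace on the Framization of the Temperley--Lieb algebra proves existence directly and is the genuinely different route the paper alludes to.
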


\begin{remark}\rm
The skein relation \eqref{theta2} is actually not the same as the one of  \cite[Theorem 1]{GoLa}. We have changed it slightly, replacing $q$ by $-q$, in order to be in agreement with the skein relation satisfied by the Jones polynomial in this paper.
\end{remark}

\begin{remark}\rm
We have $J(\sqcup_{i=1}^r K_i) = (q^{-1}+q)^{r-1} \prod_{i=1}^r J(K_i)$.
\end{remark}

\begin{remark}\rm
Note that in $L_0$ the two components involved in the mixed crossing have been fused into one.
\end{remark}

The invariant $\theta$ is a specialization of the $3$-variable polynomial invariant, first introduced in \cite[Theorem 8.1]{CJKL}, 
$\Theta: \mathcal{L} \rightarrow \C[q^{\pm 1}, E^{\pm 1}, {\mu}^{\pm 1}]$, which is  is defined in the same way as $\theta$, but satisfying the skein relation
$$ {\mu}^{-1}\Theta(L_+) - {\mu} \Theta(L_-)=(q^{-1}-q)\theta(L_0). $$
Taking $\mu=q^2$ yields $\theta$.

The existence of the invariant $\Theta$ is proved in \cite{CJKL} by showing that it coincides with a variation of a  $3$-variable invariant for tied links defined by Aicardi and Juyumaya in \cite{AiJu1, AiJu2}. Another diagrammatic proof of tis existence is given by Kauffman and Lambropoulou in \cite{Kauffman2017-ra}, using the notion of ``descending stacks''. We will make use of them when we define the categorification of $\theta$ in the next sections.

Let $d \in \mathbb{N}^*$. For $E=1/d$, the invariants $\Theta$ and $\theta$ can be defined algebraically as Markov traces on the Yokonuma--Hecke algebra of type $A$ and the Framization of the Temperley--Lieb algebra respectively, using Jones's technique. This algebraic construction is the third proof of their existence. 
For the rest of the paper, we will only be interested in this case, that is, when $E$ is the inverse of a positive integer $d$. For $d=1$, the invariants $\Theta$ and $\theta$ coincide  with the HOMFLYPT and the Jones polynomial respectively. In general, 
by \cite[Theorem 8.2]{CJKL} and \cite[Theorem 5]{GoLa} (see also \cite[Example 4.16]{Chlou}),  we have:

\begin{theorem}
The invariants $\Theta$ and $\theta$ are stronger than the HOMFLYPT and the Jones polynomial respectively.
\end{theorem}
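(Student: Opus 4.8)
The plan is to first pin down what ``stronger'' means and then verify the two conditions it entails separately. Recall that an invariant $I_1$ is \emph{stronger} than an invariant $I_2$ when, on the one hand, $I_1$ refines $I_2$, meaning that $I_1(L)=I_1(L')$ forces $I_2(L)=I_2(L')$ for every pair of links $L,L'$, and, on the other hand, this refinement is strict, i.e.\ there exist links $L,L'$ with $I_2(L)=I_2(L')$ but $I_1(L)\neq I_1(L')$. I would establish the refinement (non-strict) half by a specialization argument and the strict half by exhibiting explicit distinguishing pairs.

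For the refinement direction I would invoke the fact, recorded just above the statement, that for $d=1$ (equivalently $E=1$) the invariant $\Theta$ coincides with the HOMFLYPT polynomial and $\theta$ coincides with the Jones polynomial. Thus HOMFLYPT is the specialization of $\Theta$ at $E=1$, and the Jones polynomial is the specialization of $\theta$ at $E=1$ (equivalently of $\Theta$ at $E=1$, $\mu=q^2$). Consequently, if $\Theta(L)=\Theta(L')$ as elements of $\C[q^{\pm 1},E^{\pm 1},\mu^{\pm 1}]$, then evaluating at $E=1$ yields $\mathrm{HOMFLYPT}(L)=\mathrm{HOMFLYPT}(L')$, and the analogous evaluation for $\theta$ yields the Jones statement. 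Taking contrapositives, $\Theta$ and $\theta$ separate at least every pair of links separated by HOMFLYPT and by Jones respectively.

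For strictness I would produce explicit pairs of links sharing the same HOMFLYPT (resp. Jones) polynomial yet separated by $\Theta$ (resp. $\theta$). Such pairs are exactly the ones tabulated in \cite[Theorem 8.2]{CJKL} and \cite[Theorem 5]{GoLa} (see also \cite[Example 4.16]{Chlou}); they are necessarily genuine multi-component links rather than knots, since the two families of invariants agree on knots and diverge only once mixed crossings are present. The underlying mechanism is that the defining rules \eqref{theta1}--\eqref{theta2} make $\theta$ (and $\Theta$) weight the fusion of distinct components via the factor $E^{1-r}$, in a way HOMFLYPT does not, so that two links with identical HOMFLYPT can nonetheless be told apart by how their components are linked. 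Checking a single such pair --- computing both invariants on each link and verifying that the HOMFLYPT values agree while the $\Theta$ values differ --- already suffices to conclude strictness.

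The main obstacle is precisely this strict half: the distinguishing pairs cannot be obtained by any formal manipulation and must be found and verified by explicit, in practice computer-assisted, evaluation of both invariants. This computational search is exactly the content of the cited results, which I would invoke rather than reproduce; the remainder of the argument is the elementary specialization observation above.
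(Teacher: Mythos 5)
Your proposal is correct and matches the paper, which itself offers no argument beyond citing \cite[Theorem 8.2]{CJKL}, \cite[Theorem 5]{GoLa} and \cite[Example 4.16]{Chlou} for the explicit distinguishing pairs of links --- exactly the references you defer to for the strictness half. The only difference is that you additionally supply the (easy and correct) specialization-at-$E=1$ argument for the refinement direction, whereas the paper's stated notion of ``stronger'' only requires the strictness half, so this is a harmless addition.
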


By ``stronger'', we mean that these invariants distinguish links that the others (the ``weaker'' ones) cannot distinguish. 

\subsection{Knot theory as dynamical system}

\label{sec:knottheory-invariant}

In this section, we will describe the algorithm of \cite{Lickorish1987-am}  and \cite{Kauffman2017-ra} for associating to each oriented link diagram a family of unions of unlinked knots, called  ``descending stacks''. This will allow us to compute the value of any map $f$ on $\mathcal{L}$ that satisfies a generalized skein relation via a recursive process resembling a dynamical system whose initial condition is the value of $f$ on any union of unlinked knots. However, constructing the descending stacks depends on several choices that we make, so in order to prove the $f$ is well-defined, we have to further show that the value of $f$ on an oriented link does not depend on the choices made.

Let $L$ be an oriented link diagram. We say that $L$ is 
\emph{ordered}, if an order is given to its components, and 
\emph{based}, if a 
basepoint is chosen on each component.
 If $L$ is both ordered and based, we say that $L$ is \emph{generic}.
We can turn every oriented link diagram to a generic one by making the needed choices. Of course, the associated generic diagram is not unique.

A generic diagram is a {\it  descending stack} if, when walking along its components in their given order following their orientations and starting from their basepoints, every mixed crossing is first traversed along its over-arc.  The structure of a descending stack depends on the ordering of its components, but not on the choice of basepoints or the numbering of the mixed crossings. 

Starting from a generic link diagram $L$, with components $K_1,K_2,\ldots,K_r$, we can associate to it a family of descending stacks as follows:\smallbreak

\begin{enumerate}
[{\bf Step 1.}]

\item {} We perform the following procedure for $i=1$, then $i=2$, $i=3$, until we reach $i=r$: We start  walking from the basepoint on $K_i$ following its orientation and every time we come across a  mixed crossing  along its under-arc, we mark it. We continue until we return to the basepoint. Note that if all marked mixed crossings so far are switched, we obtain a generic diagram with $r$ components, where $K_1,\ldots,K_i$ are unlinked from the remaining components and lie above them, and $K_1$ lies above $K_2$, $K_2$ lies above $K_3$, etc. 

 \smallbreak

\item {}  We proceed with replacing our initial diagram by two new generic diagrams as follows: The first one, denoted by $L_1$, is obtained by switching the first  marked mixed crossing of the previous step, and thus has $r$ components. The second one, denoted by $L^{(1)}$, is obtained by smoothing the first marked mixed crossing of Step 1, and thus has $r-1$ components (two of the original components are fused into one). These two new diagrams are made generic by the same choices as for $L$. For a component resulting from the merging of two, we choose as basepoint the one of the smaller in order component involved. 
We repeat the same procedure on the second marked mixed crossing of $L_1$,
and obtain two diagrams $L_2$ and $L^{(2)}$. We continue until have done the same thing for all marked mixed crossings of Step 1. If $s$ is the total number of such mixed crossings, then we end up with an $r$-component link $L_s=:\delta L$, which is a descending stack with components $K_1, K_2,\ldots,K_r$, and  $s$ $(r-1)$-component links $L^{(1)},L^{(2)},\ldots,L^{(s)}$. We define $s$ to be the
\emph{distance} of $L$ from  $\delta L$. Clearly, the distance of a generic diagram is well-defined.

 \smallbreak

\item {} We apply the above procedure (Steps 1 and 2) for each generic diagram $L^{(1)},L^{(2)},\ldots,L^{(s)}$ obtained through the smoothing of mixed crossings of $L$.

\end{enumerate} 

We repeat Steps 1--3 until we end up with a family of descending stacks
$\{\delta_1:=\delta L, \delta_2,\ldots,\delta_m\}$. Among them only $\delta_1$ has $r$ components, while all other descending stacks have less than $r$ components.

Let now $R$ be an integral domain.  We say that
$f: \mathcal{L} \rightarrow R$ satisfies a generalized skein relation if
\begin{equation} \label{generalizedSkein}
r_+ f(L_+) + r_- f(L_-)+ r_0 f(L_0)+r_\infty(L_+, L_-,L_0)=0, 
\end{equation}
where $r_+, r_- \in R^\times$, $r_0,r_\infty \in R$ and
$L_+,L_-,L_0$ is a Conway triple.
In a classical skein relation, we have $r_\infty(L_+, L_-,L_0)=0$, but the above expression allows us to handle also the case of the Khovanov homology.

Let  $L$ be a generic link diagram, and let $\Delta=\{\delta_1, \delta_2,\ldots,\delta_m\}$ be the family of descending stacks associated to it. 
Applying the generalized skein relation to every marked mixed crossing 
that we switched or smoothed to reach $\Delta$ (let us denote their number by $M$) yields
\begin{equation}\label{alphainf}
f(L) = \sum_{i=1}^m \alpha_i(r_+,r_-,r_0) f(\delta_i) +\alpha_\infty(r_\infty),
\end{equation}
where the coefficients $\alpha_i(r_+,r_-,r_0)$ are products of  $\pm r_+^{\pm 1}$, 
$\pm r_-^{\pm 1}$, $\pm r_0$, for $i=1,\ldots,m$, while $\alpha_\infty(r_\infty),$ is a product of $M$ terms of the form 
$-r_\infty(c)$, where $c$ runs over Conway triples, multiplied with either $r_+^{-1}$ or $r_-^{-1}$.
Therefore, if we have an ``inital condition'' that gives the value of $f$ on any union of unlinked knots, then we can compute $f(L)$. 

Now, the map $f$ is well-defined if whenever $L$ and $L'$ are two generic diagrams of the same oriented link in $\mathcal{L}$, we have $f(L)=f(L')$. 
In order to show that $f$ is well-defined, it is enough to prove that its value on an oriented link $L$ does not depend on the sequence of mixed crossing switches, nor the choice of basepoints, nor the ordering of its components.
Link invariants that satisfy skein relations or generalized skein relations, such as the Jones polynomial, the $\theta$-invariant or the Khovanov homology, are obviously well-defined.

\subsection{Categorifying $\theta$: the case of knots}

Let $d \in \N^*$ and $E=1/d$.
As in the case of the Jones polynomial, we would like to construct a homology whose Poincar\'e polynomial, evaluated at $t=-1$, yields the \emph{unnormalized $\theta$-invariant} $\hat{\theta}$, which is given by $\hat{\theta}(L)=d(q+q^{-1})\theta(L)$ on any oriented link $L$. Further, for $d=1$, it should become the classical Khovanov homology.

Let now $G$ be the cyclic group of order $d$. For any knot $K$, we define
\begin{equation}\label{first step}
  KH^{i,j}_d(K):=\Q[G]\otimes_{\Q} KH^{i,j}(K).
\end{equation}
The corresponding Poicar\'e polynomial is
\[
  \Kh_d(K)=\sum_{i} t^i \mathrm{qdim}(KH^{i,*}_d(K))=d \cdot \Kh(K).
\]

If now $L_1$, $L_2$ are unions of unlinked knots, then we can define inductively 
$KH^{i,j}_d$ on $L_1 \sqcup L_2$ as follows:
\begin{equation}\label{second step}
  KH^{i,j}_d(L_1 \sqcup L_2):=\bigoplus_{\nu_1 +\nu_2=i \atop
  \mu_1+\mu_2=j}  KH^{\nu_1,\mu_1}_d(L_1) \otimes KH^{\nu_2,\mu_2}_d(L_2).
\end{equation}
The above result implies the multiplicativity of the corresponding Poincar\'e polynomial:
\[
  \mathrm{Kh}_d(L_1\sqcup L_2)=\mathrm{Kh}_d(L_1) \cdot \mathrm{Kh}_d(L_2).
\]

\begin{remark}\rm
For $d=1$, and thus the classical Khovanov homology, one can prove \eqref{second step} using the  K\"unneth spectral sequence \cite[Theorem 3.6.1]{Weibel},
after observing that the Khovanov complex of $L_1 \sqcup L_2$ is the tensor product of the Khovanov complexes of $L_1$ and $L_2$. It is important that we are working over the field $\mathbb{Q}$ for this to hold. The case of Khovanov homology with coefficients in $\Z$ or, even worse, over an arbitrary ring is much more complicated. 
\end{remark}

We deduce that if $K_1,\ldots,K_r$ are knots, then
\begin{equation} \label{desc-stack-def}
\Kh_d(\sqcup_{i=1}^r K_i)=\prod_{i=1}^r \Kh_d(K_i) = \prod_{i=1}^r d\Kh(K_i)=
d^r \Kh(\sqcup_{i=1}^r K_i).
\end{equation}
Evaluating  the above formula at $t=-1$ yields:
\[
\Kh_d(\sqcup_{i=1}^r K_i)\mid_{t=-1} = d^r \Kh(\sqcup_{i=1}^r K_i)\mid_{t=-1}=
d^r \hat{J}(\sqcup_{i=1}^r K_i) = 
\hat{\theta}(\sqcup_{i=1}^r K_i) .
\]

\subsection{Categorifying $\theta$: the case of links}\label{3.3}

Equation \eqref{desc-stack-def} is the categorified equivalent of Equation \eqref{theta1}. We would now like to obtain the value of $\Kh_d$ on any link $L$ by applying the following rule:  On mixed crossings the skein relation of the Khovanov polynomial holds. However, the Khovanov polynomial does not satisfy a skein relation in the classical sense; the closest to a skein relation that we have is Equation \eqref{skein-rel-bar2}, which can be rewritten as follows (replacing $\Kh$ with $\Kh_d$):
\begin{equation}
\label{skeinSwap}
\Kh_d(L) =   y^{2 \epsilon_P} \Kh_d(\sigma_P L) - 
\epsilon_P y^{\epsilon_P} z \Kh_d( s_P L) +  C'(L,\sigma_P L, s_PL)
\end{equation} 
where $z=tq-(tq)^{-1}$,  $y=tq^2$, $\sigma_P L$ is the link with a given crossing $P$ switched
(that is, $L_+$ for $L_-$ and vice versa), $s_P L$ is the link with the crossing  $P$ smoothed, $\epsilon_P=\pm 1$ is the sign of the crossing $P$
and $C'(L,\sigma_P L, s_PL)= \epsilon_P y^{\epsilon_P} C^{\rm sym}(s_PL, L_-,L_+)\in \Q[q^{\pm 1},t^{\pm 1}]$.
We say that $C'(L,\sigma_P L, s_PL)$ is the \emph{defect} of the skein relation.
Unfortunately, $C'(L,\sigma_P L, s_PL)$
depends on all three diagrams $L$, $\sigma_P L$ and $s_PL$. Because of our lack of control on the value of the defect, we were not able to show that, by applying rule \eqref{skeinSwap} on mixed crossings, we obtained a well-defined link invariant. This is the reason why we decided to introduce and extra framing variable $d'$ and 
 the following variation of the generalized skein relation
\begin{equation}
\label{skeinSwap2}
\boxed{
\Kh_{d,d'}(L) =   
y^{2 \epsilon_P} \Kh_{d,d'}(\sigma_P L) - 
\epsilon_P y^{\epsilon_P} z 
d'
\Kh_{d,d'}( s_P L) +  d^{\alpha(L)} C'(L,\sigma_P L, s_PL)
}
\end{equation}
where $\alpha(L)$ is the number of components of the link $L$. We can now construct a link invariant as follows:

We start with an oriented link $L$  with components $K_1,\ldots,K_r$. 
There are $r!$ ways of ordering the components of $L$, each corresponding to 
a  permutation $\beta \in \mathfrak{S}_r$. We will consider the ordering as part of the structure of $L$ and we will denote the link $L$ with the ordering $\beta$ by $L^\beta$.

Now, let $\beta \in \mathfrak{S}_r$ 
and let  $\Gamma_{L^\beta}$  
denote the set of  diagrams representing $L^\beta$ that have minimal number of crossings. It is clear that any diagram in this set can be transformed to another one in this set by  a sequence of Reidemeister moves.  
For every diagram $D \in \Gamma_{L^\beta}$, we denote by 
$\mathrm{mix}(D)$ the set of its mixed crossings and by 
$\mathrm{Cmix}(D)$ the subset of $\mathrm{mix}(D)$ of crossings at which the component below is smaller than the component above with respect to the $\beta$-ordering. Therefore, the set $\mathrm{Cmix}(D)$ consists of the mixed crossings that  need to be switched so that in the descending stack $\delta L^\beta$ the order of the resulting  knot components from top to bottom is given by the $\beta$-ordering (with the smallest component on top).

Let $D \in \Gamma_{L^\beta}$.
For any mixed crossing $P \in \mathrm{Cmix}(D)$, we can perform a switching and smoothing according to the relation given in Equation \eqref{skeinSwap2}. 
More specifically, we have (we change the notation to $\Kh_{d,d'}^P$ in order to keep track of the crossing $P$):
\[
\Kh_{d,d'}^P(D) =   
y^{2 \epsilon_P} \Kh_{d,d'}^P(\sigma_P D) - 
\epsilon_P y^{\epsilon_P} z 
d'
\Kh_{d,d'}^P(s_P D) +  d^{\alpha(L)} C'(D,\sigma_P D, s_PD).
\]
We observe that the number of components $\alpha(L)$ of $L$ does not depend on the choice of the diagram $D$.
As far as the ordering is concerned, the link $\sigma_P D$, where the components at the mixed crossing $P$ are switched, inherits the ordering of $L^\beta$, while in $s_P D$, where the components at the mixed crossing $P$ are merged together, 
the new component inherits its numbering from the smallest of the two components involved; the ordering of the components of $s_P D$ corresponds to a permutation
$\beta' \in \mathfrak{S}_{r-1}$ which respects otherwise the $\beta$-ordering of the components of $L$. 
It is clear that repeating this procedure will result in descending stacks, where $\Kh_{d,d'}$ can be defined with the use of Equation \eqref{desc-stack-def} (replacing $\Kh_d$ with $\Kh_{d,d'}$). Indeed, the diagram $\sigma_P D$ has smaller distance from the descending stack $\delta L^\beta$ than the original diagram $D$ and the diagram $s_P D$ has a mixed crossing less than  $D$.

If now $L$ is a union of $r$ unlinked knots, then we define $\Kh_{d,d'}(L):=\Kh_d(L)$. Otherwise, we have
$\#\mathrm{Cmix}(D) \neq 0$ for all $D \in \Gamma_{L^\beta}$ with $\beta \in \mathfrak{S}_r$, and
we define   
\begin{equation}
\label{Khddprime}
\Kh_{d,d'}(L):=
\frac{1}{r!} 
\sum_{\beta \in \mathfrak{S}_r}
\frac{1}{\# \Gamma_{L^\beta}} 
\sum_{D \in \Gamma_{L^\beta}}
\frac{1}{\# \mathrm{Cmix}(D)} 
\sum_{P \in \mathrm{Cmix}(D)}
\Kh_{d,d'}^P(D)
\end{equation}
This is essentially a quantity which is summed  over all possible choices we made on the level of the generic diagram, hence it is independent of them. It is also clear that, thanks to the use of the set $\Gamma_{L^\beta}$, the value of  $\Kh_{d,d'}(L)$ is invariant under Reidemeister moves. Hence, $\Kh_{d,d'}$ is a well-defined link invariant.

We will now see why we consider the link invariant $\Kh_{d,d'}$ a categorification of $\theta$. First, we observe that for $d'=1$ and $t=-1$, Equation
\eqref{skeinSwap2} 
becomes
\[
\Kh_{d,1}(L) =   
y^{2 \epsilon_P} \Kh_{d,1}(\sigma_P L) - 
\epsilon_P y^{\epsilon_P} z 
\Kh_{d,1}( s_P L) 
\]
with $z=q^{-1}-q$ and $y=-q^2$, which is the same skein relation satisfied by $\theta$ on mixed crossings. Since $\theta$ is a well-defined link invariant, it is 
invariant under all possible choices. So in Equation \eqref{Khddprime}, we sum over all possible choices the same value and obtain
\[
\xymatrix{
\Kh_{d,d'}(L) \ar@{|->}[rr]^-{t=-1,d'=1} &  &\theta(L).
}
\]

Let us now consider the case $d'=d$. 
We extend the definition of $\Kh_d$ on any link $L$ by establishing the following rule: On mixed crossings $\Kh_d$ satisfies the generalized skein relation
\[
\Kh_{d}(L) =   
y^{2 \epsilon_P} \Kh_{d}(\sigma_P L) - 
\epsilon_P y^{\epsilon_P} z 
d
 \Kh_{d}( s_P L) +  d^{\alpha(L)} C'(L,\sigma_P L, s_PL).
\]
We will show that $\Kh_d$ is well-defined; in particular, we have 
$\Kh_d(L)=d^{\alpha(L)}\Kh(L)$. 
We deduce that
\[
\xymatrix{
\Kh_{d,d'}(L) \ar@{|->}[rr]^-{d=d'} &  & \Kh_d(L).
}
\]

We start with a generic diagram of an oriented link $L$ with $r$ components. 
We follow the algorithm described in \S \ref{sec:knottheory-invariant}
 in order to form the family of descending stacks $\{\delta_1,\ldots,\delta_m\}$ associated to it.  
We can  draw a graph, called the \emph{skein tree} of $L$, whose vertices are labelled by the link diagrams appearing when applying the algorithm (starting from $L$ at the top and resulting to the descending stacks $\delta_1,\ldots,\delta_m$ at the bottom) and whose edges connect any link diagram with its switched and smoothed versions at a given mixed crossing. Each vertex of the skein tree is the outcome of a series of switchings and smoothings, starting from the link diagram $L$.

Assume that we are 
in the vertex $L_v$ of the skein tree of $L$, and that $L_v$ is obtained from $L$ after applying
$r_v$ switchings and  $r'_v$ smoothings.
This link $L_v$ has $r-r'_v$ components and its contribution $\Kh_d(L_v)$ in the computation of $\Kh_d(L)$ is multiplied by $d^{r'_v}$. 
Therefore, by construction, the ``total defect'' in the computation of $\Kh_d(L)$ is given by 
\[
\sum_{v} d^{r-r_v'} d^{r'_v} C'(L_v, \sigma_{P_v} L_v, s_{P_v} L_v), 
\]
where $P_v$ is the mixed crossing that is switched and smoothed at the vertex $L_v$ of the skein tree according to the algorithm.  
It is clear that $\Kh_d(L)=d^r\Kh(L)$.

We note that, in order to be able to obtain both $\theta$ and  $\Kh_d$ as specializations of the link  invariant $\Kh_{d,d'}$, we could not have simply imposed the rule \eqref{skeinSwap2} on mixed crossings, because the discontinuity of the function $\Kh_{d,d'}$ would not allow us to show invariance under Reidemeister moves. This is why we defined $\Kh_{d,d'}$ by summing over all possible choices as in Equation \eqref{Khddprime}.

\subsection{Open questions}

As mentioned in the introduction and in the previous section, it is an open question whether we could have applied the method of Lickorish--Millett and Kauffman--Lambropoulou in order to define a framization of the Khovanov homology with the use of the  generalized skein relation \eqref{skeinSwap}, which is the one satisfied by the classical Khovanov homology with scalars extended to $\mathbb{Q}[G]$. It is certain that we cannot imitate their proof and show that our function $\Kh_d$ is independent of all choices made by relying on the properties of the base invariant involved. 
This is why we choose to (a) sum over all possible choices and divide by their number, and (b) introduce the framing parameter $d'$.

Then one has to prove that this new function is indeed a link invariant by showing stability under the Reidemeister moves. The Reidemeister I move poses no problem, since it is handled by the base invariant at each individual component. However, stability under the  Reidemeister II move does not seem to hold, since the proofs found in \cite{Lickorish1987-am} and in \cite{Kauffman2017-ra} use the independence of the choices made and such a treatment is not possible in our case. 
So instead of working with an arbitrary diagram representing a link $L$, we work with  diagrams with minimal number of crossings. 
In this way, we arrive at the invariant $\Kh_{d,d'}$ defined by Equation \eqref{Khddprime}.

The drawback of our method is that it seems very difficult to compute the value of the invariant $\Kh_{d,d'}$. 
Given a link $L$, there is no simple way to  find all diagrams with minimal number of crossings. Moreover, assuming that we have a diagram of $L$ with minimal number of crossings, it seems equally difficult to find all other diagrams which represent  $L$ and still have the same minimal number of crossings. 

We believe that it is interesting, in order to address the above problem, to find a way of constructing all link diagrams with a given number of minimal crossings. The existing lists, for example in the ``Knot Atlas'' \cite{knotatlas}, give information of only one representative of the Reidemeister equivalence class. If such a set $\Gamma_L$ of minimal crossings diagrams for a given link $L$ is provided, then summing 
a well-defined function $f$ on the set of link diagrams over $D\in\Gamma_L$ provides us with a link invariant $h_f$, that is,
\[
h_f(L):=\sum_{D\in \Gamma_L} f(D). 
\]
In this article we have selected the appropriate function to give us the $\theta$-invariant. An interesting example for a function to integrate in this way in a future work is the $\zeta$-function of graphs (cf.~\cite{terras}). We would like also to investigate further properties of these ``integrals'', such as the existence of a generalized skein relation.

\end{document}